\title{\bf M\"obius transforms and cyclic equations}
\author{Kurt Girstmair\\
Institut f\"ur Mathematik \\
Universit\"at Innsbruck   \\
Technikerstr. 13/7        \\
A-6020 Innsbruck, Austria \\
Kurt.Girstmair@uibk.ac.at}
\date{}
\let\@@maketitle=\maketitle
\def\maketitle{\def\thispagestyle##1{\relax}\@@maketitle}
\newtheorem{theorem}{Theorem}
\newenvironment{remark}{{\em Remark.}}{}
\def\BE{\begin{equation}}
\def\EE{\end{equation}}
\def\BD{\begin{displaymath}}
\def\ED{\end{displaymath}}
\def\BA{\begin{array}}
\def\EA{\end{array}}
\def\BEA{\begin{eqnarray*}}
\def\EEA{\end{eqnarray*}}
\def\Z{\mathbb Z}
\def\Q{\mathbb Q}
\def\R{\mathbb R}
\def\C{\mathbb C}
\def\phi{\varphi}
\def\MB{\mbox}
\def\LD{\ldots}
\def\OV{\overline}
\def\SP#1{\langle #1 \rangle}
\begin{document}

\maketitle

\begin{abstract}
We present a simple method for the construction of polynomials with cyclic Galois groups, hoping to encourage a reader
with some background in algebra to make computations of his/her own.
\end{abstract}

\noindent

\section{Introduction}
\label{s1}

Let $K$ be a subfield of the field $\C$ of complex numbers. A randomly chosen polynomial $f\in K[X]$ of degree 5 is likely to be irreducible with the symmetric
group $S_5$ as Galois group.
So, if $x_1,\LD,x_5$ are the zeros of $f$ in $\C$, an arbitrary permutation of the zeros like
\BD
 \left(\begin{array}{ccccc}
       x_1&x_2&x_3&x_4&x_5 \\
       x_4&x_5&x_1&x_3&x_2
     \end{array}\right)
\ED
defines an automorphism of the splitting field $K[x_1,\LD,x_5]$ of $f$ over $K$. This means that a relation
\BD
   P(x_1,\LD,x_5)=0,
\ED
where $P$ is a polynomial in five variables with coefficients in $K$, remains valid if the above permutation is applied to the zeros $x_1,\LD,x_5$.

At first glance this property looks nicely. But a somewhat deeper knowledge of
algebraic equations shows that
\begin{itemize}
\item the equation $f=0$ is not solvable by radicals;
\item the splitting field $K[x_1,\LD,x_5]$ of $f$ is a vector space over $K$ of dimension $5\,!=120$;
\item the zero $x_5$, say, cannot be expressed in terms of $x_1$ alone --- which would mean that $x_5\in K[x_1]$; not even in terms of $x_1,x_2, x_3$ alone but only
in terms of $x_1$, $x_2$, $x_3$ and $x_4$.
\end{itemize}
For example, if $K=\Q$, $f = X^5+2X^4-X^3+X^2-X+1\in \Q[X]$ is such a polynomial, as the software package MAPLE easily shows.

At the other end of the scale are polynomials of degree 5 with cyclic Galois group, such as $f = X^5-5X^4-X^3+12X^2+5X-1$.
Indeed, the splitting field of $f$ over $\Q$ equals $\Q[x_1]$, which means that it is a vector space of dimension 5 over $\Q$. In particular, all of the other zeros of $f$ can be
expressed as polynomials in $x_1$ of degree (at most) 4, these polynomials being
\BD
  \begin{array}{l}
  P_2=(-18X^4+98X^3-23X^2-216X+6)/23,\\P_3=(5X^4-17X^3-46X^2+60X+98)/23,\\
  P_4=(X^4-8X^3+58X+38)/23,\\ P_5=(12X^4-73X^3+69X^2+75X-27)/23
  \end{array}
\ED
in our case.
For the time being, the reader need not know how to find these polynomials. However, one may check that they have this property, on inserting $P_2$, for instance,
in $f$,
\BD
   f(P_2)=P_2^5-5P_2^4-P_2^3+12P_2^2+5P_2-1,
\ED
and computing the residue of $f(P_2)$ modulo $f$. This residue should be zero.

As to the Galois group of $f$, i. e., the group of automorphisms of the field $\Q[x_1]$,
one can show that it equals $\SP{(x_1x_2x_3x_4x_5)}$, the group generated by the cycle $(x_1x_2x_3x_4x_5)\in S_5$. This cycle maps $x_1$ to $x_2$, $x_2$ to $x_3$,
\LD, $x_5$ to $x_1$. Here $x_j=P_j(x_1)$, $j=2,\LD,5$. Note, however, that this only works since we have numbered the polynomials $P_j$ appropriately.
Changing this numbering would mean that we had to choose a different cycle as the generator of the Galois group.

Methods for the construction of polynomials $f$ with cyclic Galois group over $K$ are known. A fundamental paper with explicit examples depending on one
parameter is \cite{De}. The method we present here has several advantages for a nonprofessional, namely,
\begin{itemize}
\item it is easy to understand and
      simple to handle if one has a software package like MAPLE at hand;
\item the interrelation of the zeros of $f$ is a priori clear; in particular, analogues of the above polynomials $P_2,\LD, P_5$ are easily found;
\item it is also easy to number the zeros in such a way that a certain cycle (like the above one) generates the Galois group.
\end{itemize}
A disadvantage of this method consists in the fact that one has to work with an appropriate extension field $K$ of $\Q$ in certain cases (instead of $\Q$).
 However, this disadvantage is compensated by the fact that the method yields polynomials over $\Q$ with other interesting Galois groups (like dihedral groups, see Section \ref{s4}).

Possibly this method is not new, but we do not know where it could be found in the literature.

What kind of knowledge does this article presume? We hope that the basic concepts of Galois theory suffice for the understanding of the method (Section \ref{s2}).
Sections \ref{s3} and \ref{s4} require a bit more. But numerous examples in the text should be helpful for feeling one's way. Of the many books on Galois theory
we list only \cite{Co, Lo, We}, each of which has its own merits.

\section{The method}
\label{s2}

As above, let $K$ be a subfield of $\C$ and $A\in GL(2,K)$, the group of invertible $2\times 2$-matrices with entries in $K$. The matrix $A$ defines
the M\"obius transform
\BD
   \C\smallsetminus K\to \C\smallsetminus K: z\mapsto A\circ z;
\ED
indeed, if
\BD
A=\left(
                \begin{array}{cc}
                  a & b \\
                  c & d \\
                \end{array}
              \right),
\MB{ then } A\circ z=\frac{az+b}{cz+d}.
\ED
Note the associativity $A\circ (B\circ z)=(AB)\circ z$, which means that we may write $A\circ B\circ z$ for this item. In particular, $A\circ A\circ \LD \circ A\circ z=A^n\circ z$,
if the matrix $A$ occurs $n$ times on the left hand side.

The main idea of the method is as follows.
Suppose we are given a zero $x_1$ of an irreducible polynomial $f\in K[X]$ of degree $n\ge 3$. Then one may hope that the other zeros are
\BD
  x_2=A\circ x_1, x_3=A^2\circ x_1, \LD, x_n=A^{n-1}\circ x_1.
\ED
In order that the Galois group of $f$ equals $\SP{(x_1,\LD,x_n)}$, one must have $A^n\circ x_1=x_1$.
Let
\BE
\label{2.5}
A^n=\left(
           \begin{array}{cc}
             a' & b' \\
             c' & d' \\
           \end{array}
         \right).
\EE
Then $c'x_1^2+(d'-a')x_1-b'=0$ holds. In the case $c'\ne 0$, $x_1$ is a zero of a quadratic polynomial in $K[X]$ and
of the irreducible polynomial $f$ of degree $n\ge 3$. This is impossible. Hence $c'=0$, but then $d'-a'=0$ and $b'=0$.
Accordingly, $A^n$ is the diagonal matrix $a'\cdot I_2$,
where $I_2$ is the $2\times 2$-unit matrix.

How to find such a matrix $A$? For this purpose we put
\BD
 N=\left\{
     \begin{array}{ll}
      n , & \MB{ if }n \MB{ is odd;} \\
      2n , & \hbox{ if }n \MB{ is even.}
     \end{array}
   \right.
\ED
Let $\zeta\in \C$ be a primitive $N$th root of unity. In most cases we choose
$\zeta=\zeta_N=e^{2\pi i/N}$. We need a $2\times 2$-matrix $A=\left(
                                                             \begin{array}{cc}
                                                               a & b \\
                                                               c & d \\
                                                             \end{array}
                                                           \right)$
with entries in $K$ that is similar to
\BE
\label{2.2}
   \left(
     \begin{array}{cc}
       r\zeta & 0 \\
       0 & r\zeta^{-1}\\
     \end{array}
   \right) \MB{ for some } r\in K.
\EE
Similarity here means similarity over $\C$.
Since the matrix in (\ref{2.2}) has two different eigenvalues, it suffices that $A$ has the same characteristic polynomial, i. e.,
the same trace and the same determinant. So we have the conditions
\BE
\label{2.3}
a+d=r(\zeta+\zeta^{-1})\MB{ and } A= ad-bc= (r\zeta)(r\zeta^{-1})=r^2
\EE
for the entries $a$, $b$, $c$, $d$.
Because the matrix $A^n$ is similar to $\pm r^n\cdot I_2$, it must coincide with this multiple of the unit matrix. Hence it has the desired property.

\medskip
\noindent
{\em Example.}
Let $n=6$, so $N=12$. We obtain, for $\zeta=\zeta_{12}$, $\zeta+\zeta^{-1}=2\cos(2\pi/12)=\sqrt 3$, so one possibly thinks that one has to choose $K=\Q[\sqrt 3]$. Here, however,
$r$ plays a fruitful role, since one can choose $r=\sqrt 3$, so $A$ may be a rational $2\times 2$-matrix with $a+d=\sqrt 3\cdot\sqrt 3=3$ and $ad-bc=\sqrt 3^2=3$.
Accordingly, we may choose $A=\left(
                                \begin{array}{cc}
                                  1 & 1 \\
                                  -1 & 2 \\
                                \end{array}
                              \right)$, say.

\medskip
How to find the polynomial $f$ whose roots are $x_1,\LD,x_n$? For this purpose we consider the rational function field
\BD
  K(X)=\left\{\frac{f_1}{f_2}: f_1,f_2\in K[X], f_2\ne 0\right\}.
\ED
For $A$ as above and $g\in K(X)\smallsetminus K$ we
define
\BD
  A\circ g=\frac{ag+b}{cg+d},
\ED
which lies in $K(X)\smallsetminus K$. As in the case of $\C\smallsetminus K$ we have the relation
\BD
  A\circ A\circ \LD\circ A\circ g=A^n\circ g,
\ED
where $A$ occurs $n$ times on the left hand side. Our candidate for $f$ is a numerator polynomial $f_1$ of the rational function
\BE
\label{2.4}
 F=X+A\circ X+A^2\circ X+\LD+A^{n-1}\circ X+ C=\frac{f_1}{f_2},
\EE
where $f_1,f_2\in K[X]$ are co-prime polynomials. The constant $C\in K$ allows a variation of $F$. Such a variation can be necessary for making $f_1$ irreducible.
The polynomial $f_1$ is uniquely determined only up to a factor in $K\smallsetminus\{0\}$.

\medskip
\noindent
{\em Example.} Let $A$ be as in the above example. We obtain, with $C=1$,
\BD
  F= X+\frac{X+1}{-X+2}+\frac 3{-3X+3}+\frac{-3X+6}{-6X+3}-\frac{-9X+9}{9X}+\frac{-18X+9}{-9X-9}+1,
\ED
whose numerator polynomial
\BD
   f_1 = 2X^6+2X^5-35X^4+40X^3+5X^2-14X+2
\ED
is irreducible over $\Q$. Indeed, it is a polynomial with Galois group $\SP{(x_1x_2x_3x_4x_5x_6)}$,
where $x_1$ is an arbitrary zero of $f_1$ and $x_j=A^{j-1}\circ x_1$ for $j=2,\LD,6$.
This follows from the following more general result.

\begin{theorem} % Theorem 1 %%%%%%%%%%%%%%%%%%%%%%%%%%%%%%%%%%%%%%%%%%%%%%%%%%%%%%%%%%%%%%%%%%%%%%%%%%%
\label{t1}

Let $A\in GL(2,K)$ have the above properties, in particular, $A$ satisfies {\rm (\ref{2.3})}. Let $n\ge 3$, $F$ be defined as in {\rm (\ref{2.4})} and $f_1$ a numerator
polynomial of $F$. Suppose that $f_1$ is irreducible and has degree $n$. For an arbitrary zero $x_1$ of $f_1$ in $\C$, put $x_j=A^{j-1}\circ x_1$,
$j=2,\LD,n$. Then the numbers $x_1,\LD,x_n$ are the complex zeros
of $f_1$. Further, $K[x_1]$ is a Galois extension of $K$ with Galois group $\SP{(x_1x_2\LD x_n)}$.

\end{theorem}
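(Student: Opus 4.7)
My plan is to establish, in turn, that (i) every $x_j$ is a zero of $f_1$, (ii) the $x_j$ are pairwise distinct, and (iii) the Galois group is the claimed cyclic one.

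For (i), I would use that $A^n$ is a scalar multiple of $I_2$, as already shown in the discussion following (\ref{2.5}); hence $A^n\circ z = z$ for every $z$. A cyclic re-indexing of the summands in $F$ then gives the functional equation $F(A\circ z) = F(z)$. Since $f_1$ and $f_2$ are coprime, $f_1(x_1)=0$ translates to $F(x_1)=0$, and iterating the functional equation yields $F(x_j)=0$, hence $f_1(x_j)=0$ for every $j$. A minor point to dispose of is that each $A^{j-1}\circ x_1$ is a finite complex number; otherwise the denominator of $A^{j-1}$ would vanish at $x_1$, forcing $x_1\in K$ and contradicting the irreducibility of $f_1$ of degree $n\ge 3$.

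For (ii), suppose $x_i=x_j$ with $i<j$ and put $k=j-i\in\{1,\LD,n-1\}$, so that $x_1$ is a fixed point of $A^k$. The decisive observation is that $A^k$ is similar to $\DIAG(r^k\zeta^k,r^k\zeta^{-k})$, and since $N=n$ (if $n$ is odd) or $N=2n$ (if $n$ is even), a divisibility check shows $\zeta^{2k}\ne 1$ for $1\le k\le n-1$; hence $A^k$ is not a scalar matrix. The fixed-point equation (analogous to the one derived just after (\ref{2.5})) then forces $x_1$ to satisfy a nonzero polynomial relation of degree $\le 2$ over $K$, contradicting the irreducibility of $f_1$.

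Combining (i) and (ii) gives $n$ distinct complex roots of the degree-$n$ polynomial $f_1$, so these exhaust its zeros. Each $x_j = A^{j-1}\circ x_1$ lies in $K[x_1]$ (as a rational expression in $x_1$ over $K$ with nonzero denominator, by the remark in (i)), so $K[x_1]$ is a splitting field of $f_1$ over $K$ and hence Galois of degree $n$. For (iii), any $\sigma$ in the Galois group commutes with the M\"obius action of $A$, because $A$ has entries in $K$; so if $\sigma(x_1)=x_j$, then $\sigma(x_k)=\sigma(A^{k-1}\circ x_1)=A^{k-1}\circ x_j=x_{k+j-1}$ (indices read mod $n$), showing $\sigma$ to be a power of $(x_1x_2\LD x_n)$. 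Containment in $\SP{(x_1x_2\LD x_n)}$ combined with the order count $n$ gives equality.

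I expect the principal technical hurdle to be step (ii): carefully tracing the case distinction in the definition of $N$ to guarantee $\zeta^{2k}\ne 1$ for every $1\le k\le n-1$, which is precisely what prevents the intermediate powers $A^k$ from becoming scalar and so makes the fixed-point argument go through.
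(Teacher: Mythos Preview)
Your proposal is correct and follows essentially the same route as the paper: the cyclic invariance of $F$ under $z\mapsto A\circ z$ for step~(i), the scalar/non-scalar dichotomy for $A^k$ via $\zeta^{2k}\ne 1$ for step~(ii), and the compatibility of Galois automorphisms with the M\"obius action of $A$ for step~(iii). The only cosmetic differences are that you reverse the logical order in (ii) (first show $A^k$ is non-scalar, then derive the low-degree relation) and in (iii) you prove containment in $\SP{(x_1\LD x_n)}$ rather than constructing the generator directly; both are equivalent to the paper's argument.
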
 %%%%%%%%%%%%%%%%%%%%%%%%%%%%%%%%%%%%%%%%%%%%%%%%%%%%%%%%%%%%%%%%%%%%%%%%%%%%%%%%%%%%%%%%%

\begin{proof}
The numbers $x_2,\LD,x_n$ are zeros of $f_1$. Indeed, for $x_2=A\circ x_1$ we have
\BD
  F(x_2)= A\circ x_1+A^2\circ x_1+\LD+A^n\circ x_1+C.
\ED
However, since $A^n\circ x_1=x_1$, this is the same as $F(x_1)$, which is zero (observe that $f_2(x_1)\ne 0$ for the denominator polynomial $f_2$ of $F$ since
$f_1$ and $f_2$ are co-prime). This argument also works
for $x_3,\LD,x_n$.

Suppose that the numbers $x_1,\LD,x_n$ are not pairwise different, so
$x_j=x_k$ for $1\le j<k\le n$. Put $l=k-j\in\{1,\LD,n-1\}$.
Then $A^l\circ x_1=x_1$. As in the context of (\ref{2.5}) we conclude that
$A^l$ is a diagonal matrix $t\cdot I_2$, $t\in K$. Since $A^l$ is similar to
\BD
   \left(
     \begin{array}{cc}
       r^l\zeta^l & 0 \\
       0 & r^l\zeta^{-l} \\
     \end{array}
   \right),
\ED
we obtain $r^l\zeta^l=t=r^l\zeta^{-l}$. Therefore, $\zeta^{2l}=1$, which is only possible if $N$ divides $2l$. This contradicts $l<n$
(if $n=N$ is odd, $2l$ is different from $N$).

Now we know that $\{x_1,\LD,x_n\}$ is the set of all complex zeros of $f_1$.
Obviously, $K[x_1]$ is the splitting field of
$f_1$, since it contains $x_1,\LD,x_n$. In particular, it is a Galois extension of $K$. Let $k\in\{1,\LD,n\}$. Since $f$ is irreducible,
an automorphism of the field $K[x_1]$ over $K$ can be defined by
\BD
 K[x_1]\to K[x_1]: x_1\mapsto x_k=A^{k-1}\circ x_1
\ED
(see \cite[Cor. 2.6.2]{We}).
Hence we are given $n$ distinct automorphisms of this kind, which form the Galois group of $K[x_1]$ over $K$. Considered as a permutation of $x_1,\LD,x_n$,
the automorphism $x_1\mapsto x_2$ equals $(x_1x_2\LD x_n)$, since it maps $x_2=A\circ x_1$ to $A\circ x_2=x_3$, and so on. It is easy to see that $x_1\mapsto x_3$
is just the permutation $(x_1\LD x_n)^2$. We obtain the whole Galois group in this way.
\end{proof}

\begin{remark}
It is not difficult to see that a numerator polynomial $f_1$ of $F$ has always a degree $\le n$. The degree $n$ is just what one expects if $A$ is
not of a very special form.

\end{remark}

\noindent
{\em Example.} Let $n=N=5$ and $\zeta=\zeta_5$. We put $r=1$. Our matrix $A=\left(
                                                                        \begin{array}{cc}
                                                                          a & b \\
                                                                          c & d \\
                                                                        \end{array}
                                                                      \right)$
has to satisfy $a+d=\zeta+\zeta^{-1}=2\cos(2\pi/5)=(-1+\sqrt 5)/2$ and $ad-bc=1$. Hence we put $K=\Q[\sqrt 5]$ and
\BD
 A=\left(
     \begin{array}{cc}
       1 & (-5+\sqrt 5)/2 \\
       1 & (-3+\sqrt 5)/2 \\
     \end{array}
   \right).
\ED
If we choose $C=0$, a numerator polynomial of $F$ is
\begin{eqnarray}
\label{2.6}
  f_1=(3-\sqrt 5)X^5+(-3+\sqrt 5)X^4+(-80+32\sqrt 5)X^3\nonumber \\
   +(300-128\sqrt 5)X^2+(-395+175\sqrt 5)X+178-80\sqrt 5.
\end{eqnarray}
MAPLE is able to factorize $f_1$ over the field $K$. It says that $f_1$ is irreducible (the proof of Theorem \ref{t2} yields a different method to show this).
Starting with the zero $x_1\approx -3.585182$ of $f_1$, we obtain $x_2=A\circ x_1\approx 1.252070$, $x_3=A\circ x_2\approx -0.149288$, $x_4=A\circ x_3\approx 2.882340$, and
$x_5=A\circ x_4\approx 0.600060$ as the other zeros of $f_1$.

In Section 4 we will see that $f_1$ gives rise to an irreducible polynomial in $\Q[X]$ of degree 5 whose splitting field has a Galois
group isomorphic to the dihedral group $D_5$ of order $10$.

\begin{remark}
In the setting of Theorem \ref{t1} it is easy to find a polynomial $P\in K[X]$ such that $x_2=P(x_1)$. We may assume that $c\ne 0$,  since otherwise $F$ (and $f_1$) is a polynomial of degree $\le 1$.
As $f_1$ and $cX+d$ are co-prime, the
extended Euclidean algorithm yields polynomials $g$ and $h$ in $K[X]$
such that
\BD
   f_1g+(cX+d)h=1.
\ED
Because $f_1(x_1)=0$, we have $(cx_1+d)h(x_1)=1$ and $A\circ x_1=(ax_1+b)h(x_1)$. The remainder of $(aX+b)h$ modulo $f_1$ is a polynomial $P$ of degree $\le n-1$ of the desired kind. In the case of the above example, MAPLE yields
\BEA
 P= -X^4+(-1+\sqrt 5) X^3/2+(18-3\sqrt 5) X^2+\\
  (-61+15\sqrt 5)X/2+14-6\sqrt 5.
\EEA
\end{remark}

Next we show how Theorem \ref{t1} works for $n=8$, $10$ and $12$ provided that $K$ is a suitable quadratic extension of $\Q$.
In all of these cases $N=2n$. Here $\zeta+\zeta^{-1}$ does not lie in the quadratic extension $K$ of $\Q$, but $(\zeta+\zeta^{-1})^2$ does.
Accordingly, we put
\BD
   r=\frac{s}{\zeta+\zeta^{-1}}
\ED
for a number $s\in K$. Then condition (\ref{2.3}) reads
\BE
\label{2.8}
   a+d=s,\enspace ad-bc=\frac{s^2}{(\zeta+\zeta^{-1})^2}.
\EE
For $n=8$ we have, with $\zeta=\zeta_{16}$, $(\zeta+\zeta^{-1})^2=\zeta_8+2+\zeta_8^{-1}=2+2\cos(\pi/4)=2+\sqrt 2$.
Then (\ref{2.8}) says
\BD
   a+d=s,\enspace ad-bc=\frac{s^2}{2+\sqrt 2}=\frac{s^2(2-\sqrt 2)}2.
\ED
Hence we put $K=\Q[\sqrt 2]$. One may choose $s=2$, say, but $s=2+\sqrt 2$ leads to a slightly nicer result.
So we have $s^2/(2+\sqrt 2)=2+\sqrt 2$. A matrix $A$ with this property is
\BD
  A=\left(
      \begin{array}{cc}
        2 & -2+\sqrt 2 \\
        1 & \sqrt 2 \\
      \end{array}
    \right).
\ED
If we choose $C=1$, we obtain a numerator polynomial $f_1\in K[X]$ with the properties of Theorem \ref{t1}.

In the case $n=10$ we choose $\zeta=\zeta_{20}^3$, which gives a nicer result than $\zeta_{20}$ itself. We have $(\zeta+\zeta^{-1})^2=2+2\cos(3\pi/5)=(5-\sqrt 5)/2$.
Then (\ref{2.8}) reads
\BD
  a+d=s,\enspace ad-bc=s^2(5+\sqrt 5)/10.
\ED
If we choose $s=10$, a suitable matrix $A$ is
\BD
  A=\left(
      \begin{array}{cc}
        5 & 5 \\
        -5-2\sqrt 5 & 5 \\
      \end{array}
    \right),
\ED
and on putting $C=1$ we obtain a polynomial $f_1$ of the desired kind.

In the case $n=12$ we choose $\zeta=\zeta_{24}$ and have $(\zeta+\zeta^{-1})^2=2+2\cos(\pi/6)=2+\sqrt 3$.
By (\ref{2.8}),
\BD
a+d=s,\enspace ad-bc=s^2(2-\sqrt 3).
\ED
If we choose $s=1$, a suitable matrix is
\BD
 A=\left(
     \begin{array}{cc}
       2 & -4+\sqrt 3 \\
       1 & -1 \\
     \end{array}
   \right)
\ED
With $C=1$ this gives a polynomial $f_1$ of the desired kind.

\begin{remark}
In the setting of (\ref{2.3}) suppose that $K$ is a real field. Hence the determinant $r^2$ of $A$ is positive.
This, however, means that the M\"obius transform $z\mapsto A\circ z$ maps the upper half-plane $\{u+iv: u,v\in\R, v>0\}$
into itself; and the same holds for the lower half-plane. Therefore, all complex zeros of $f_1$ must be real. Indeed, suppose
that $x_1\not\in\R$. Since the complex-conjugate $\OV{x_1}$ of $x_1$ is a zero of $f_1$, we have $\OV{x_1}=x_k$ for some $k$. But $x_k=A^{k-1}\circ x_1$ lies in the same half-plane
as $x_1$, which is impossible for $\OV{x_1}$. In particular, we have only real zeros in the above cases $n=8,
10, 12$.
\end{remark}

We conclude this section with a look at the parameters which we can choose if we construct the polynomial $f_1$ in the above way. The parameter $r$ has only the purpose
of diminishing the degree of $K$ over $\Q$. If we multiply this parameter by a non-vanishing element of $K$, we obtain the same set of polynomials $F$.
Hence $r$ cannot be considered as a free parameter. By (\ref{2.3}), however,
two of the four parameters $a,b,c,d$ are free, and the constant $C$ is also free. Therefore, we can vary three quantities in $K$ in order to obtain an irreducible polynomial $f_1$
of degree $n$.

\section{A wreath product}
\label{s3}
In the above cases $n=5,8,10, 12$ we do not find polynomials $f_1$ with rational coefficients but with coefficients in a quadratic extension $K$ of $\Q$. How can we
obtain a polynomial in $\Q[X]$ from such an $f_1$? Here the multiplication with the conjugate polynomial $f_1'$ is an obvious way.

In order to make this idea precise, we suppose that $K$ equals $\Q[\sqrt D]$ for $D\in\{2,3,5\}$. Let $f_1\in K[X]$ be given and
\BD
   (\enspace)':K\to K:u+v\sqrt D\mapsto u-v\sqrt D
\ED
be the non-trivial field automorphism of $K$. The map $(\enspace)'$ has a natural extension to the polynomial ring $K[X]$. It maps each coefficient $\alpha$ of
a polynomial to the respective coefficient $\alpha'=(\enspace)'(\alpha)$. We denote this extension also by $(\enspace)'$. For example, the polynomial $f_1$ of (\ref{2.6})
is mapped to the conjugate polynomial
\BEA
  f_1'=(3+\sqrt 5)X^5+(-3-\sqrt 5)X^4+(-80-32\sqrt 5)X^3\\
   +(300+128\sqrt 5)X^2+(-395-175\sqrt 5)X+178+80\sqrt 5.
\EEA
Put $f=f_1\cdot f_1'$. Then this polynomial lies in $\Q[X]$. In our case, we obtain
\BEA
  f=4(X^{10}-2X^9-39X^8+170X^7+35X^6-1538X^5\\
  +3753X^4-3970X^3+1825X^2-155X-79).
\EEA
The polynomial $f$ is irreducible.
What is the Galois group of the splitting field of $f$ over $\Q$? It turns out that it is isomorphic to the wreath product $C_5\wr C_2$. Roughly speaking, this means the following:
Let $C_5$ denote a cyclic group of order $5$. Then $C_5\wr C_2$ contains the cartesian product $C_5\times C_5$ as a normal subgroup of index 2. In particular,
$|C_5\wr C_2|=5\cdot 5\cdot 2=50$ and $C_5\wr C_2$ contains a cyclic group $C_2$ of order $2$ that acts on this normal subgroup in a certain way. We will explain this action
in greater detail in Theorem \ref{t3}.

The proof that the Galois group has this structure is fairly easy: One factorizes $f$ modulo a number of primes until the factorization type $(5,1,1,1,1,1)$ occurs, i. e.,
there is an irreducible factor of degree 5 and five factors of degree 1, each of which occurs with multiplicity 1. In our case $29$ is such a prime.

But all these things will be explained in the next two theorems. In these theorems we need not assume that $f_1$ or $f_1'$ is irreducible in $K[X]$.
It suffices that $f_1f_1'$ is irreducible in $\Q[X]$.

\begin{theorem} %Theorem 2%%%%%%%%%%%%%%%%%%%%%%%%%%%%%%%%%%%%%%%%%%%%%%%%%%%%%%%%%%%%
\label{t2}
Let $n\in\{ 5, 8, 10, 12\}$ and $K$ the corresponding quadratic extension of $\Q$. Suppose that the polynomial $f_1\in K[X]$ of Theorem \ref{t1} has degree $n$.
Let $f_1'\in K[X]$ be the conjugate polynomial of $f_1$ and $f=f_1f_1'$ be irreducible in $\Q[X]$.
Let $m$ be a natural number such that $mf$ lies in $\Z[X]$. Suppose that, for some prime number $p$,
$mf$ has a factorization of type $(n, 1,\LD,1)$ modulo $p$. Then $L=K[x_1,y_1]$ is the splitting field of $f$ over $\Q$,
where $f_1(x_1)=0=f_1'(y_1)$. The degree of $L$ over $\Q$ is
\BD
   [L:\Q]=2n^2.
\ED
\end{theorem}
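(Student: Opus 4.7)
The plan is to show $[L:\Q] = 2n^2$ by identifying $H := \mathrm{Gal}(L/K)$ with $C_n \times C_n$ and then multiplying by $[K:\Q]=2$. First I would check that $f_1$ (and likewise $f_1'$) is irreducible over $K$: since $f$ is irreducible of degree $2n$ over $\Q$, any root $x_1$ satisfies $[\Q[x_1]:\Q]=2n$, while the minimal polynomial of $x_1$ over $K$ divides $f_1$, so $[K[x_1]:K]\le n$ and hence $[K[x_1]:\Q]\le 2n$. Combined with $\Q[x_1]\subseteq K[x_1]$, this forces $\Q[x_1]=K[x_1]$ and $[K[x_1]:K]=n$, so $f_1$ is the minimal polynomial of $x_1$ over $K$. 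Theorem \ref{t1} then gives that $K[x_1]/K$ is a cyclic Galois extension of degree $n$ containing all roots $x_j=A^{j-1}\circ x_1$; the analogous statement holds for $K[y_1]/K$ via the conjugated matrix $A'$. Hence $L=K[x_1,y_1]$ contains every root of $f$, and is moreover the full splitting field of $f$ over $\Q$: the coefficients of $f_1$ lie in that splitting field, and at least one of them has a nonzero $\sqrt D$-component (otherwise $f=f_1^2$ would not be irreducible), so $\sqrt D$, and therefore $K$, lies in the splitting field.

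Next, set $G=\mathrm{Gal}(L/\Q)$. Restriction yields a homomorphism $\rho\colon H\to \mathrm{Gal}(K[x_1]/K)\times \mathrm{Gal}(K[y_1]/K)\cong C_n\times C_n$ whose kernel is $\mathrm{Gal}(L/K[x_1,y_1])=\{1\}$, so $|H|\le n^2$ and $[L:\Q]\le 2n^2$. Moreover, the two $n$-element sets $X=\{x_1,\ldots,x_n\}$ and $Y=\{y_1,\ldots,y_n\}$ are each either preserved or swapped by an element of $G$, and the block-preserving elements are precisely those of $H$ (they fix $K$, hence $f_1$), while the block-swapping elements form the nontrivial coset.

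For the reverse inequality I would invoke Dedekind's theorem: the hypothesized factorization type $(n,1,\ldots,1)$ of $mf$ modulo $p$ (with $p$ chosen of good reduction) produces a Frobenius $\sigma\in G$ with cycle type $(n,1,\ldots,1)$ on the $2n$ roots. Since a block-swapping permutation has no fixed points in $X\cup Y$, $\sigma$ must preserve blocks; hence $\sigma\in H$, and it acts as an $n$-cycle on one of $X,Y$ while fixing the other pointwise. Choosing any $\tau\in G\smallsetminus H$ (which swaps $X$ and $Y$), the conjugate $\tau\sigma\tau^{-1}\in H$ acts as an $n$-cycle on the opposite block and fixes the first. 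Under $\rho$ these two elements map to $(g,1)$ and $(1,g')$ with $g,g'$ generators of $C_n$, so together they generate $C_n\times C_n$. Thus $\rho$ is surjective, $|H|=n^2$, and $[L:\Q]=2n^2$.

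The main obstacle is the Dedekind/Frobenius step: one must verify that the factorization data for $mf$ modulo $p$ legitimately yields an element of $G$ with the prescribed cycle type on the full set of $2n$ roots (in particular by choosing $p$ coprime to $m$ and to the discriminant, so that reduction mod $p$ gives a genuine Frobenius conjugacy class). Once that is in place, the block-preserving dichotomy together with the conjugation-by-$\tau$ trick closes the gap routinely.
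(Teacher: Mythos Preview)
Your proof is correct and follows the same overall architecture as the paper (irreducibility of $f_1$, identification of $L$ as the splitting field over $\Q$, upper bound $[L:\Q]\le 2n^2$, lower bound via the Dedekind/Frobenius element), but differs in two steps.

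For the irreducibility of $f_1$ over $K$, the paper argues by contraposition: a nontrivial factorization $f_1=gh$ in $K[X]$ yields $f=(gg')(hh')$ in $\Q[X]$, contradicting irreducibility of $f$. You instead run a degree count through $\Q[x_1]\subseteq K[x_1]$, which is equally valid and has the side effect of giving $K\subseteq\Q[x_1]$ directly; this makes your subsequent ``coefficient of $f_1$'' argument for $K\subseteq L$ redundant (though not incorrect).

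For the lower bound, the paper is more economical: from the Frobenius $\sigma$ with cycle type $(n,1,\ldots,1)$ it simply picks a root $z$ fixed by $\sigma$, observes that $\sigma$ then fixes $\Q[z]$ so that $[L:\Q[z]]\ge|\sigma|=n$, and multiplies by $[\Q[z]:\Q]=2n$. Your route---using the block structure on $X\cup Y$ to place $\sigma$ in $H$, then conjugating by a block-swapping $\tau$ to produce a second generator and conclude that the restriction map $H\to C_n\times C_n$ is surjective---is longer but buys you more: it explicitly exhibits $\mathrm{Gal}(L/K)\cong C_n\times C_n$, a fact the paper derives separately in the paragraphs leading up to Theorem~\ref{t3}.
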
 %%%%%%%%%%%%%%%%%%%%%%%%%%%%%%%%%%%%%%%%%%%%%%%%%%%%%%%%%%%%%%%%%%%%%%%%

\begin{proof}
Since $f$ is irreducible over $\Q$, $f_1$ must be irreducible in $K[X]$. Otherwise, we have a non-trivial factorization $f_1=g\cdot h$ in $K[X]$, which produces the corresponding
factorization $f_1'=g'h'$ of the conjugate polynomial. But then $gg'$ and $hh'$ lie in $\Q[X]$ and $f$ has a non-trivial factorization over $\Q$, which we have excluded.
In the same way $f_1'$ is irreducible in $K[X]$.

As we assume, $x_1\in\C$ is a zero of $f_1$, so $x_2=A\circ x_1, \LD ,x_n=A^{n-1}\circ x_1$ are the remaining zeros of $f_1$, by Theorem \ref{t1}. By assumption, $y_1\in\C$ is a zero of $f_1'$. Now $f_1'$ is a numerator polynomial of
\BD
 F'=X+A'\circ X+A'^2\circ X+\LD+A'^{n-1}\circ X+C',
\ED
where $A'$ arises from $A$ if we apply the map $(\enspace)'$ to the entries of $A$.
As in the case of $f_1$ and $x_1$,  we see that $y_2=A'\circ y_1, \LD, y_n=A'^{n-1}\circ y_1$ are the zeros of $f_1'$.
Hence $L=K[x_1,y_1]$ is the splitting field of $f$ over $K$.

We show that $L$ equals $\Q[x_1,\LD,x_n,y_1,\LD,y_n]$, the splitting field of
$f$ over $\Q$. For this purpose we assume that $f_1$ is monic (recall that it is unique only up to a factor in $K\smallsetminus\{0\}$).
Then $f_1$ does not lie in $\Q[X]$. Indeed, if $f_1$ were in $\Q[X]$, then $f_1'$ would be equal to $f_1$, and
$f=f_1^2$ would be reducible.
So $f_1$ must have a coefficient in $K\smallsetminus \Q$. This coefficient generates the quadratic field $K$ over $\Q$, and it has the form
$P(x_1,\LD,x_n)$ for a (symmetric) polynomial $P$ in $n$ variables with coefficients in $\Q$.
Accordingly, $K[x_1]=\Q[x_1,\LD,x_n]$ and, in the same way, $K[y_1]=\Q[y_1,\LD,y_n]$. Altogether, $L=K[x_1,y_1]=\Q[x_1,\LD,x_n,y_1,\LD,y_n]$.

Next we consider the tower of fields
\BD
  \Q\subseteq K\subseteq K[x_1]\subseteq K[x_1, y_1]=L
\ED
and obtain the degrees $[K:\Q]=2$, $[K[x_1]:K]=n$ and $[K[x_1,y_1]:K[x_1]]\le [K[y_1]:K]=n$. Therefore,
$[L:K]\le 2\cdot n\cdot n=2n^2$.

Now $mf\in \Z[X]$ has a factorization of type $(n, 1,\LD,1)$ mod $p$ for some prime $p$. This means that
there exists an automorphism $\sigma$ of order $n$ in the Galois group of $L$ over $\Q$ that fixes $n$ zeros of $f$
(the existence of $\sigma$ is a consequence of Tchebotarev's density theorem, see \cite{StLe} or \cite[p. 402]{Co}).
Let $z$ be such a zero. Then $\sigma$ also fixes $\Q[z]$, and $[L:\Q[z]]$ must be $\ge n$, the order of $\sigma$.
On the other hand,
$[\Q[z]:\Q]=2n$, since $f$ is irreducible.
Altogether, we obtain $[L:\Q]\ge n\cdot 2n=2n^2$. Both estimates together show $[L:\Q]=2n^2$.
\end{proof}

 The field $L$ is the composite of the Galois extensions $K_1=K[x_1]$ and $K_2=K[y_1]$ of $K$, whose Galois groups
are $\SP{(x_1\LD x_n)}$ and $\SP{(y_1\LD y_n)}$ (see \cite[Sect. 2, F11]{Lo}. It is well-known that the map
\BD
  \sigma\mapsto (\sigma|_{K_1},\sigma|_{K_2})
\ED
embeds the Galois group of $L$ over $K$ in the cartesian product $\SP{(x_1\LD x_n)}\times \SP{(y_1\LD y_n)}$ (see \cite[Sect. 12, F2]{Lo}).
Since both groups have the same number of elements
(which equals $n^2=[L:K]$), this embedding must be an isomorphism. Hence the Galois group of $L$ over $K$ contains an element $\sigma$ such that
$\sigma|_{K_1}=(x_1\LD x_n)$, $\sigma|_{K_2}=id$, and an element $\tau$ such that $\tau|_{K_1}=id$ and $\tau|_{K_2}=(y_1\LD y_n)$.

Let $G$ be the Galois group of $L$ over $\Q$. For the sake of simplicity we denote $\sigma\in G$ simply by $(x_1\LD x_n)$  and $\tau\in G$ by $(y_1\LD y_n)$.
The group $\SP{\sigma, \tau}$ of order $n^2$ is just the Galois group of $L$ over $K$. Let $\rho\in G$ be such that $\rho|_K=(\enspace)'$. Such a $\rho$ exists,
since $K$ is a Galois extension of $\Q$, whose Galois group arises from $G$ by restriction: If we consider $\mu|_K$ for all $\mu\in G$, we must obtain all
elements of the Galois group of $K$ over $\Q$ (see \cite[Sect. 8, F4]{Lo}).

Note that $\rho(x_1)$ must be one of $y_1,\LD, y_n$, since
\BD
  0=f_1(x_1)=\rho(f_1)(\rho(x_1))=f_1'(\rho(x_1)).
\ED
As above, $\rho(f_1)$ means that $\rho$ is applied to all coefficients of $f_1$. The same argument shows that $\rho(y_1)$ is one of $x_1,\LD,x_n$. This implies
that, for suitably chosen exponents $j$, $k$, the automorphism $\sigma^j\tau^k\rho$ maps $x_1$ to $y_1$ and $y_1$ to $x_1$. For reasons of simplicity
we write $\rho$ instead of $\sigma^j\tau^k\rho$. So $\rho$ exchanges $x_1$ and $y_1$. But then it also exchanges $x_j$ and $y_j$ for $j=2,\LD,n$,
since we have
\BD
\rho(x_j)=\rho(A^{j-1}\circ x_1)=\rho(A^{j-1})\circ\rho(x_1)=A'^{j-1}\circ y_1=y_j.
\ED
Altogether, we may write $\rho=(x_1y_1)(x_2y_2)\LD(x_ny_n)$. Hence we have shown the following theorem.

\begin{theorem}% Theorem 3 %%%%%%%%%%%%%%%%%%%%%%%%%%%%%%%%%%%%%%%%%%%%%%%%%%%%%%%%%%%%%%%%%%%%%%
\label{t3}
Under the assumptions of Theorem {\rm\ref{t2}}, the Galois group $G$ of $L$ over $K$ is generated by
$\sigma=(x_1\LD x_n)$, $\tau=(y_1\LD y_n)$, and $\rho=(x_1y_1)\LD(x_ny_n)$. It contains the normal subgroup
$\SP{\sigma,\tau}$ of order $n^2$ and index 2.
Further $G=\{\sigma^j\tau^k\rho^l: j,k=0,\LD,n-1, l=0,1\}$.

\end{theorem}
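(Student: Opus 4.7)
The plan is to essentially formalize what the discussion preceding the theorem statement has already sketched, namely to produce the three generators $\sigma$, $\tau$, $\rho$ of $G=\mathrm{Gal}(L/\Q)$ and then read off the coset decomposition. (I note that the statement writes ``over $K$'' where I believe ``over $\Q$'' is intended, since $\rho$ clearly lies outside $\mathrm{Gal}(L/K)$.)

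First I would produce $\sigma$ and $\tau$. Since $L=K_1 K_2$ is the compositum inside $L$ of the two Galois extensions $K_1=K[x_1]$ and $K_2=K[y_1]$ of $K$, each cyclic of order $n$ (by Theorem~\ref{t1} applied to $f_1$ and $f_1'$), the usual restriction map sends $\mathrm{Gal}(L/K)$ injectively into the product $\mathrm{Gal}(K_1/K)\times\mathrm{Gal}(K_2/K)$. Theorem~\ref{t2} gives $[L:K]=n^2$, so both sides have order $n^2$ and the restriction is an isomorphism. Pulling back the pairs $((x_1\ldots x_n),\mathrm{id})$ and $(\mathrm{id},(y_1\ldots y_n))$ yields elements $\sigma,\tau\in G$ acting as these cycles on the respective sets of roots and trivially on the other set. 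The subgroup $\langle\sigma,\tau\rangle$ is then the full $\mathrm{Gal}(L/K)$, hence normal of index $2$ in $G$.

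Next I would produce $\rho$. Because $K/\Q$ is Galois, restriction $G\to\mathrm{Gal}(K/\Q)$ is surjective, so there exists $\rho_0\in G$ with $\rho_0|_K=(\ )'$. Applying $\rho_0$ to the equation $f_1(x_1)=0$ gives $f_1'(\rho_0(x_1))=0$, so $\rho_0(x_1)=y_a$ for some $a$, and symmetrically $\rho_0(y_1)=x_b$ for some $b$. The key adjustment is to replace $\rho_0$ by $\rho:=\sigma^{1-b}\tau^{1-a}\rho_0$; since $\sigma$ and $\tau$ act trivially on $y$'s and $x$'s respectively, one checks directly that $\rho(x_1)=y_1$ and $\rho(y_1)=x_1$. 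This modified $\rho$ still restricts to $(\ )'$ on $K$, so it is not in $\langle\sigma,\tau\rangle$.

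To extend this to $\rho(x_j)=y_j$ and $\rho(y_j)=x_j$ for all $j$, I would use the defining relation $x_j=A^{j-1}\circ x_1$: since $\rho$ applied to the entries of $A$ yields $A'$ (as $A\in GL(2,K)$), compatibility of $\rho$ with M\"obius action gives
\[ \rho(x_j)=\rho(A^{j-1}\circ x_1)=(A')^{j-1}\circ y_1=y_j, \]
and symmetrically $\rho(y_j)=x_j$. Thus $\rho=(x_1y_1)\cdots(x_ny_n)$ as permutations of the roots. Finally, since $\langle\sigma,\tau\rangle$ has index $2$ in $G$ and $\rho\notin\langle\sigma,\tau\rangle$, the two cosets are $\langle\sigma,\tau\rangle$ and $\rho\langle\sigma,\tau\rangle$, giving $G=\{\sigma^j\tau^k\rho^l:0\le j,k\le n-1,\ l\in\{0,1\}\}$.

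The only delicate step is the simultaneous normalization $\rho(x_1)=y_1$ and $\rho(y_1)=x_1$ by a single element of $\langle\sigma,\tau\rangle$; I expect this to go through smoothly precisely because $\sigma$ and $\tau$ act independently on the $x$-orbit and the $y$-orbit, letting me fix the two images independently. Everything else is routine Galois theory once the restriction-isomorphism in the first step is in hand.
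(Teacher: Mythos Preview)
Your proposal is correct and follows essentially the same route as the paper: the restriction isomorphism $\mathrm{Gal}(L/K)\cong\mathrm{Gal}(K_1/K)\times\mathrm{Gal}(K_2/K)$ to produce $\sigma,\tau$, the lift of $(\ )'$ adjusted by an element of $\langle\sigma,\tau\rangle$ to normalize $\rho(x_1)=y_1$, $\rho(y_1)=x_1$, and the M\"obius identity $\rho(A^{j-1}\circ x_1)=(A')^{j-1}\circ y_1$ to propagate this to all $j$. Your observation that ``over $K$'' in the statement should read ``over $\Q$'' is also correct.
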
 %%%%%%%%%%%%%%%%%%%%%%%%%%%%%%%%%%%%%%%%%%%%%%%%%%%%%%%%%%%%%%%%%%%%%%%%%%%%%%%%%%%%%%%

\begin{remark}
In the context of the theorem, the groups $\SP{\sigma}$ and $\SP{\tau}$ are cyclic subgroups of $G$, isomorphic to the abstract cyclic group $C_n$
of order $n$. Then $\SP{\sigma,\tau}$ is a group isomorphic to the cartesian product $C_n\times C_n$. Moreover, it is a normal subgroup of index $2$
in $G$, and $\SP{\rho}$ is isomorphic to $C_2$.
In addition, we have the composition rules
\BE
\label{3.4}
 \rho\sigma= \tau\rho,\rho\tau=\sigma\rho.
\EE
This is what the notation $G\cong C_n\wr C_2$ (wreath product of $C_n$ by $C_2$) says.
\end{remark}

The examples of the foregoing section for $n=5, 8, 10, 12$ all lead to polynomials over $\Q$ with Galois group $C_n\wr C_2$.
For instance, if $n=10$, we obtain
\BEA
f=X^{20}+2X^{19}-449X^{18}-570X^{17}+28905X^{16}+26568X^{15}\\-648012X^{14}-371400X^{13}+5691930X^{12}
+2011900X^{11}\\ -19618774X^{10}-4515500X^9+28459650X^8+4137000X^7-16200300X^6\\
-1473000X^5+3613125X^4+191250X^3-280625X^2-6250X+3125,
\EEA
where we have divided our original $f$ by $-2\cdot 10^9$.

\section{Dihedral groups}
\label{s4}

Let $C_n=\SP{\sigma}$ be a cyclic group of order $n$. The dihedral group $D_n$ of order $2n$ can be defined as the group generated by $\sigma$ and an element $\rho$ of
order $2$ such that
\BE
\label{4.2}
\sigma\rho=\rho \sigma^{-1}.
\EE
Following \cite{MaSc}, we construct a polynomial $g\in\Q[X]$ of degree $n$ whose Galois group is $D_n$ for $n=5,8,10$ and $12$.

Let $f=f_1f_1'$ be as in the foregoing section. In particular, $x_1,\LD,x_n$ are the zeros of
$f_1$, $y_1,\LD,y_n$ the zeros of $f_1'$. Moreover, $L=\Q[x_1,\LD,x_n,y_1,\LD, y_n]$ is the splitting field of $f$ over $\Q$, whose Galois group $G$ is generated by $\sigma=(x_1\LD x_n)$, $\tau=(y_1\LD y_n)$ and $\rho=(x_1y_1)\LD (x_ny_n)$. Let $H=\SP{\sigma\tau}$, a cyclic subgroup of $G$ of order $n$. By (\ref{3.4}), $\rho\sigma\rho=\tau$ and $\rho\tau\rho=\sigma$,
so we have
\BE
\label{4.4}
\rho\sigma\tau\rho=\sigma\tau.
\EE
Further, $\sigma$ and $\tau$ commute with $\sigma\tau$. These facts show that $H$ is a normal subgroup of $G$,
the factor group $G/H$ being of order $2n$.

Observe that $\sigma(\sigma\tau)^{-1}=\tau^{-1}$, so we obtain
$\OV{\sigma}=\OV{\tau}^{-1}$
for the corresponding residue classes in $G/H$. From (\ref{3.4}) we see that $\sigma\rho=\rho\tau$, hence
\BD
\OV{\sigma}\,\OV{\rho}=\OV{\rho}\,\OV{\sigma}^{-1}.
\ED
In other words, $\OV{\sigma}$ and $\OV{\rho}$ satisfy the relation (\ref{4.2}). Accordingly, $G/H$ is (isomorphic to) the group $D_n$. Since $H$ is normal in $G$,
the fixed field $M$ of $H$ is a Galois extension of $\Q$ with Galois group isomorphic to $G/H$.
In particular, $[M:\Q]=2n$.

For our purpose, however, we need a subfield $M'$ of $M$ with $[M':\Q]=n$. Therefore, we look at the
relation (\ref{4.4}), which shows that the group $H'=\SP{H,\rho}=\SP{\sigma\tau,\rho}$ is abelian of order $2n$. However, it is not a normal subgroup of $G$,
since $\sigma\rho\sigma^{-1}=\rho\tau\sigma^{-1}=\rho\sigma^{-1}\tau\not\in H'$. Hence the fixed field of $H'$ is a subfield $M'$ of $M$ that is not Galois over $\Q$.
Since $[M:M']=[H':H]=2$, the Galois closure of $M'$ must be $M$.

In order to generate the field $M'$, we look for an element $z\in L$ that remains fixed under $H'$. Such an element is, for instance,
\BE
\label{4.6}
 z=x_1y_1+x_2y_2+\LD+x_ny_n.
\EE
But this element generates $M'$ only if $H'$ is the set of {\em all} elements of the Galois group that fix $z$. In other words, for an element $\mu\in G\smallsetminus H$
we should have $\mu(z)\ne z$. This assertion can be phrased in a slightly different way: If $R$ is a system of representatives of $G/H'$, the numbers $\mu(z)$
should be distinct for $\mu\in R$. In our case the group $\SP{\sigma}$ itself is a suitable set $R$.
So if the numbers $z$, $\sigma(z)$, \LD, $\sigma^{n-1}(z)$ are pairwise different, we have $M'=\Q[z]$. Moreover,
\BD
   g=\prod_{j=0}^{n-1}(X-\sigma^j(z))
\ED
is the minimal polynomial of $z$ over $\Q$, a polynomial whose splitting field has the Galois group $D_n$.

Possibly the simplest way to check whether $z$ is a suitable choice consists in approximating the elements $z,\sigma(z),\sigma^{n-1}(z)$ numerically.
Provided that the monic polynomial that belongs to $f$ has integer coefficients, the numbers $z$, $\sigma(z), \LD, \sigma^{n-1}(z)$ are algebraic integers.
This implies that $g$ is a polynomial with integer coefficients. Hence a sufficiently precise numerical approximation of these coefficients exhibits $g$.

\medskip
\noindent
{\em Examples.} Let $n=5$ and $f$ be as in the example preceding Theorem \ref{t2}. Obviously, the corresponding monic polynomial has integer coefficients.
With $z$ as in (\ref{4.6}), we obtain $g=X^5-X^4-840X^3-6135X^2+24775X-19900$. Here we have used 20 digits of precision for the zeros of $f$. The numerical result for $g$ is nearly as precise. To be sure, we determine the Galois group of the splitting field of $g$ with the help of MAPLE. It turns out that it is $D_5$.

In the case $n=10$ we look at the example at the end of the foregoing section. We find
\BEA
 g=X^{10}-X^9-22950X^8+6120X^7+166823200X^6-61635600X^5\\-383129676000X^4
+257616832000X^3+23507471680000X^2\\+13131916800000X-235594368000000
\EEA
as the minimal polynomial of $z$. MAPLE does not supply the Galois group of polynomials of a degree $\ge 10$, but factorizing $g$ modulo a number of primes produces only the types
$(10), (5,5), (2,2,2,2,2), (2,2,2,2,1,1)$ and $(1,\LD,1)$, which are
admissible for $D_{10}$ (recall the proof of Theorem \ref{t2} and the references \cite{StLe} and \cite[p. 402]{Co}).
Hence it is at least plausible that our computations are correct.

What can we do if the number $z$ of (\ref{4.6}) fails to generate $M'$ since the conjugates $\sigma^j(z)$, $j=0,\LD,n-1$ are not pairwise different?
In this case one can work with numbers of the same structure like
\BD
  x_1^2y_1^2+\LD +x_n^2y_n^2.
\ED

\begin{remark} The method presented here is not restricted to the ground fields $\Q$ or $\Q[\sqrt D]$ for $D=2,3,5$.
In the case $n=7$, for instance, a possible ground field $K$ is $\Q[v]$ for $v=\zeta_7+\zeta_7^{-1}=2\cos(2\pi/7)$, whose minimal polynomial is
$X^3+X^2-2X-1$. MAPLE allows computations in the cubic field $K$. For instance, if we take
\BD
  A=\left(
      \begin{array}{cc}
        1 & 1 \\
        v-2 & v-1 \\
      \end{array}
    \right)
\ED
and $C=1$, we arrive at an irreducible polynomial in $f_1\in K[X]$ of degree 7. By Theorem \ref{t1}, we obtain an extension $K[x_1]$ of $K$ with cyclic Galois group
of order 7.
\end{remark}

\centerline{\bf Acknowledgment.}
The author wishes to thank Gerhard Kirchner for a helpful discussion.

\vfill\eject

\end{document}